\author[1]{Lea F\"ocke}
\author[2,3]{Daniel Baumgarten}
\author[1]{Martin Burger}
\affil[1]{Institute for Analysis and Numerics, Westfälische Wilhelms-Universität Münster, Germany,}
\affil[2]{Institute of Electrical and Biomedical Engineering, Private University of Health Sciences, Medical Informatics and Technology, Hall in Tirol, Austria,}
\affil[3]{Institute of Biomedical Engineering and Informatics, Technische Universität Ilmenau, Germany,}
\title{The Inverse Problem of Magnetorelaxometry Imaging}
\date{March 19, 2018}
\DeclareMathOperator*{\argmin}{\arg \min}%
\newcommand{\mymin}[1]{\underset{#1}{\min}\ }%
\newcommand{\myargmin}[1]{\underset{#1}{\argmin}\ }%
\newcommand{\R}{\mathbb{R}}
\newcommand{\defgr}{\mathrel{\mathop:\!\!=}}
\newcommand\cellwidth{\TX@col@width}
\newtheoremstyle{mytheoremstyle}
{1.5em} 
{1em} 
{} 
{} 
{\bfseries} 
{\\} 
{.5em} 
{\thmname{#1}\thmnumber{ #2}: \thmnote{\normalfont\textsc{#3}}} 
\newtheoremstyle{myremstyle}
{1.5em} 
{1em} 
{\normalfont} 
{} 
{\bfseries} 
{} 
{.5em} 
{\thmname{#1}\thmnumber{ #2}: \thmnote{\normalfont\textsc{#3}}} 
\newtheoremstyle{myproofstyle}
{0.5em} 
{1em} 
{} 
{} 
{\bfseries} 
{} 
{.5em} 
{\thmname{\textsc{#1}}\thmnote{\textmd{ of} #3}:} 
\theoremstyle{mytheoremstyle}
\newtheorem{lem}{Lemma}[section]
\theoremstyle{myremstyle}
\theoremstyle{mytheoremstyle}
\newtheorem{thm}[lem]{Theorem}
\theoremstyle{mytheoremstyle}
\theoremstyle{mytheoremstyle}
\theoremstyle{mytheoremstyle}
\theoremstyle{mytheoremstyle}
\theoremstyle{mytheoremstyle}
\theoremstyle{myproofstyle}
\begin{document}

\maketitle

\begin{abstract}
The aim of this paper is to provide a solid mathematical discussion of the inverse problem in Magnetorelaxometry Imaging (MRXI), a currently developed technique for quantitative biomedical imaging using magnetic nanoparticles. 
We provide a detailed discussion of the mathematical modeling of the forward problems including possible ways to activate and measure, leading to a severely ill-posed linear inverse problem.
Moreover, we formulate an idealized version of the inverse problem for infinitesimal small activation coils, which allows for a more detailed  analysis of uniqueness issues.

We propose a variational regularization approach to compute stable approximations of the solution and discuss its discretization and numerical solution.
Results on synthetic are presented and improvements to methods used previously in practice are demonstrated.
Finally we give an outlook to further questions and in particular experimental design.
\end{abstract}
\keywords{{\footnotesize Magnetorelaxometry Imaging; Inverse Source Problem; Magnetic Nanoparticles; Total Variation Regularization; Uniqueness; ADMM; Magnetic Remanence}}

\section{Introduction}
Measuring and analyzing magnetic nanoparticles (MNP) for medical applications is currently under heavy research.
For example, MNP are employed for novel cancer therapy techniques referred to as Magnetic Hyperthermia \cite{Hiergeist1999} and Magnetic Drug Targeting \cite{Alexiou2011}.
For both applications, the amount and distribution of the magnetic nanoparticles in the tissue are crucial for efficacy and safety of the therapy.
Magnetorelaxometry (MRX) is able to determine the amount of magnetic nanoparticles based on their magnetic response to an external magnetic field \cite{Wiekhorst2012}.
On this basis Magnetorelaxometry Imaging (MRXI) has been developed as a novel imaging modality aiming at the acquisition of three dimensional and quantitative reconstructions of the particle distribution \cite{Liebl2015}.
This knowledge is crucial for monitoring the mentioned therapies and can further be used to validate assumptions about the distribution, finally leading to a more precise and safe treatment.

MRXI can be characterized by two alternating phases \cite{Baumgarten2008}: first, the magnetic moment of the MNP in the area of interest is aligned by coil induced magnetic fields.
Second, after these coils are switched off the MNP show a magnetic relaxation that is measured with highly sensitive sensors outside the tissue.
These steps can be executed multiple times with different excitation fields \cite{Liebl2014}. The problem of determining the quantitative distribution of the MNP from these measurements can be formulated as an inverse problem \cite{Baumgarten2008}, which we will elaborate further in Section \ref{sec:forwardmodel} and \ref{sec:inverseproblem}.

Based on this general approach, several studies have been published concerning activation patterns and coil positioning for inhomogeneous excitation fields \cite{Baumgarten2010, Crevecoeur2012, Coene2012, Baumgarten2014, Coene2015}.
The main interest here has been to improve the system condition to gain reconstruction quality using basic regularization techniques, i.e. least squares solution using the pseudo inverse, Truncated Singular Value Decomposition (TSVD) and Tikhonov regularization.
On the other hand, nonlinear regularization techniques have shown promising results for image reconstruction in undersampled MRI, CT and PET cf. e.g. \cite{Sawatzky2011}) in particular using the total variation  as part of the variational model.

In this paper, we will recall he model originally presented by Baumgarten et al. and Liebl et al. \cite{Baumgarten2008, Liebl2014} and put it into a mathematically rigorous inverse problems framework.
On this basis we will determine the inverse problem of MRXI and investigate ill-posedness and uniqueness issues.
Motivated by these insights we will present a variational model to find a meaningful solution to the image reconstruction problem, where we apply the Total Variation (TV) regularization in combination with a positivity constraint leading to
$$\mymin{c}\Vert Kc-g\Vert_2^2 + \alpha\operatorname{TV}(c) + \chi_{\geq 0}(c).$$
In the end we will provide a simulation setup and compare results using nonlinear regularization techniques with previously u used techniques for MRXI. Moreover, we will provide a preliminary discussion of the impact of different activation strategies, a crucial issue for future research.

\section{The Forward Model}\label{sec:forwardmodel}
In this section we describe the basic principles of MRXI, how data is acquired and processed.
We roll out the general idea in Subsection \ref{ssec:principles} and describe a mathematical model for a 3D environment in Subsection \ref{ssec:mathmodel}. In the end we provide a mathematically idealized model in Subsection \ref{ssec:idealmodel}.

\subsection{Basic Idea and Physical Principles}
\label{ssec:principles}
Magnetic nanoparticles for biomedical applications usually consist of a magnetic iron oxide core with a diameter of a few up to about 30 nanometers surrounded by a non-magnetic shell layer. 
The magnetic core of these particles usually contains a single magnetic domain and can therefore be modeled as a magnetic dipole, thus having an magnetization depending on core size and material used. This magnetization and therewith the magnetic moment can be oriented by external magnetic fields either within the particle's core in term of Néel motion \cite{Neel1949} or by rotation of the whole particle in terms of Brownian motion \cite{Brown1963}.


Following the idea of aligning the particles and afterwards measuring the response, we obtain two distinct phases that are implemented in MRXI: First, during the so called 'Excitation Phase' the particles are exposed to a magnetic field strong enough to reorientate the magnetization of the particles. At this point, the fields of the aligned particles add up to a measurable superposition field.
For the second 'Relaxation Phase', the external coils aligning the particles are switched off.
Due to several reasons, the particle's dipole orientation shifts in arbitrary direction yielding a relaxation signal that is measured with highly sensitive sensors. Currently, usually SQUIDs (Superconducting QUantum Interference Device \cite{Liebl2012}) are employed.
The combination of these two previously described distinct phases is called to be one iteration of MRXI. 
We provide an intuitive illustration of the entire cycle in Figure \ref{img:phases}.
Since the excitation fields are a few orders of magnitudes larger compared to the fields induced by the aligned particle, a delay time between the phases is required to ensure that the excitation fields do not influence the data acquisition.
We will give more detailed information on the referred time steps $t_0, \ldots, t_5$ in the subsequent section. 

\begin{figure}
	\centering
	\includegraphics[width=0.75\textwidth]{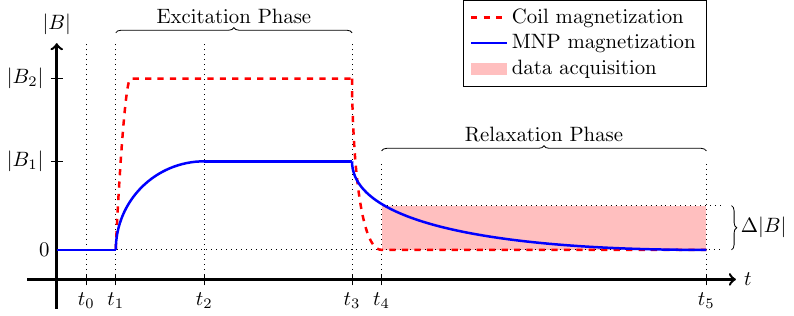}%
	\caption{\footnotesize Illustration of the absolute strength of the magnetic field induced by external coils and magnetic nanoparticles during MRXI. Timestep $t_0$ describes the default state of the system without any fields applied.
		In $t_1$ the external magnetic field is activated and induces a magnetic field with strength $\vert B_2\vert$.
		The particle alignment reaches a maximum at $t_2$ with an induced magnetic field $\vert B_1\vert$.
		In $t_3$ the coils are disabled and are fully deactivated in $t_4$.
		The data acquisition of the particles' induced fields is carried out during the time interval $\left[ t_4, t_5\right]$ (compare pink area).}
	\label{img:phases}
\end{figure}

\subsection{Mathematical Model}\label{sec:mathmodel}
\label{ssec:mathmodel}

Let us now detail the mathematical model for MRX Imaging.
For this sake we denote by $\Omega\subset\R^3$ a bounded domain describing the region of interest holding magnetic nanoparticles.
Then excitation coils and measuring sensors are positioned in $\R^3\setminus\Omega$.
In general coil and sensors may be in the same position in $\R^3\setminus\Omega$ since both are distinct processes and, in theory, can be exchanged during an MRXI iteration.

We define $c\in\mathbf{L}^2(\Omega)$ as a nonnegative density function of magnetic nanoparticles with compact support on the domain $\Omega$, it will be the unknown to be determined in the inverse problem.
We have already stated that the particle properties depend on multiple factors, including core size and material, however for our purposes we assume a constant particle base magnetization $\mathbf{m}_0$.

Then we define a vector field $\mathbf{m}\in\mathcal{L}^2(\Omega, \R^3)$ defining the magnetic moment of a corresponding particle density $c$.
We assume that in the initial state $(t_0)$ and in the full relaxation state $(t_5)$ all particles are orientated randomly on a microscopic scale.
As a result the particle's magnetization cancels out on a macroscopic scale and therefore we demand $\mathbf{m}_{t_0}=\mathbf{m}_{t_5}=0$.

In the following we specify an excitation coil $\alpha\in\mathcal{A}$, where $\mathcal{A}$ defines the set of all coils. $\alpha = (\varphi_\alpha, I_\alpha)$ consist of a conductor path $\varphi_\alpha$ and an activation current $I_\alpha$.
However, for our purposes we only consider coil activations with a constant current $I_\alpha\equiv1$.
The coil conductor is defined as a curve in $\R^3\setminus\Omega$ namely $\varphi_\alpha\in\mathcal{C}^2([0,L_\alpha],\R^3\setminus\Omega)$, where $L_\alpha$ is the length of the curve that is assumed to be given in arclength parametrization.
Then the Biot-Savart-Law (cf. \cite{Jackson1999}) provides a connection between the coil conductor path $\varphi_\alpha$ and the resulting magnetic field $\mathbf{B}$ in $w\in\Omega$:
\begin{equation}
	\mathbf{B}_\alpha^\textbf{coil}\colon\Omega \rightarrow\R^3, \quad
	w \mapsto\vartheta\int\limits_0^{L_\alpha}\varphi_\alpha^\prime(s)\times\left(\frac{w-\varphi_\alpha(s)}{\left\vert w-\varphi_\alpha(s)\right\vert^3}\right)ds \label{eq:biotsavart}
\end{equation}

Note that $w-\varphi_\alpha(s)$ with $w\in\Omega$ and $\varphi_\alpha(s)\in\R^3\setminus\Omega$ is well defined as an element of $\R^3$.
Here $\vartheta$ provides a collection of physical constants, including activation current $I_\alpha$.

Given an external magnetic field $\mathbf{B}$, i.e. $\mathbf{B}_\alpha^{\mathbf{coil}}$ as in Equation \eqref{eq:biotsavart}, the behavior of magnetic particles with a magnetization $\mathbf{m}$ can be described by the well-known Langevin function $\mathbf{L}\colon x\mapsto\coth(x)-\frac{1}{x}$ illustrated in Figure \ref{img:langevin}.
For a given magnetic field $\mathbf{B}$ and a particle density $c$ the resulting magnetization of the particles in $w\in\Omega$ is given by 
\begin{equation}
	\mathbf{m}\colon \Omega\rightarrow\R^3, \quad
	w \mapsto\mathbf{L}(\mathbf{B}(w)) c(w)\label{eq:langevinfunctional}
\end{equation}

\begin{figure}
	\centering
		\includegraphics[width=0.75\textwidth]{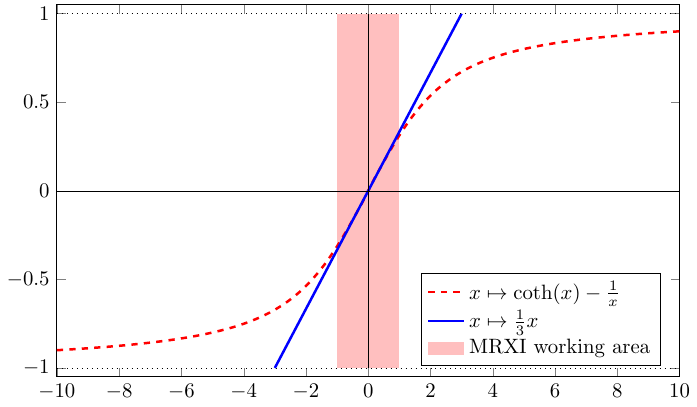}
	\caption{\footnotesize Langevin Function, linearization and working area of MRXI: The blue curve is the Langevin function defined as $\mathbf{L}\colon x\mapsto \coth(x)-\frac{1}{x}$, the red line its linearization $\mathbf{L}$ in $x=0$: $x\mapsto\frac{1}{3}x$. The pink area illustrates the approximate value range for the Langevin functional that is used for the forward operator of MRXI.}
	\label{img:langevin}
\end{figure}%

For weak magnetic fields the magnetization operates in the linear range of the Langevin function and can be approximated well by $\mathcal{L}(x)\approx\frac{x}{3}$.
Note that the correct shape and scale of the Langevin function depends on particle size, particle base magnetization, temperature, particle domain and other physical constants.
Here we set all these variables as suitable for the application of the mentioned linearization.
Then the particle magnetization $\mathbf{m}$ after being exposed to a magnetic field $\mathbf{B}_\alpha$ is given by
\begin{equation}
	\mathbf{m}_\alpha\colon \Omega\rightarrow\R^3, \quad
	w \mapsto\frac{1}3 \mathbf{B}_\alpha^\textbf{coil}(w)c(w).\label{eq:dipolemagnetization}
\end{equation}

For the sensors, which measure the magnetic field differences over time, we define a measuring space $\Sigma=(\R^3\setminus\Omega)\times\mathcal{S}^2$.
Moreover we define $\sigma = (\sigma_x, \sigma_n)\in\Sigma$ as a single sensor with position $\sigma_x\in\R^3\setminus\Omega$ and orientation $\sigma_n\in \mathcal{S}^2\subseteq\R^3$, that only acquires magnetic fields in direction of its given orientation $\sigma_n$.
For a given magnetization peak $\mathbf{m} \delta_w$ in $w\in\Omega$ the sensor in $\sigma_x$ yields a magnetic field measured in direction $\sigma_n$: 
\begin{align}
	\mathbf{B}^\textbf{meas}\colon\Omega\times\Sigma&\rightarrow\R\nonumber\\
	\left(w,\sigma\right)&\mapsto\sigma_n\cdot\left(\left(\frac{3\left(\sigma_x-w\right)\otimes\left(\sigma_x-w\right)}{\left\vert\sigma_x-w\right\vert^5}-\frac{\mathbb{I}}{\left\vert\sigma_x-w\right\vert^3}\right)\mathbf{m}(w)\right)\label{eq:measmagnetization}
\end{align}

where $\mathbb{I}\in\R^{3\times3}$ denotes the identity matrix. 

With the modeling of each part at hand we can, in a next step, assemble the forward operator.
Again we refer to the time steps as seen in Figure \ref{img:phases}.
As stated before  in the initial state ($t_0$) we have no magnetization in $\Omega$, therefore $\mathbf{m}=0$.
Then at time step $t_1$ coil $\alpha$ is activated and the resulting magnetic field $\mathbf{B}_\alpha^{\textbf{coil}}$ is given as of Equation \eqref{eq:biotsavart}.
The applied fields initiate the reorientation process.
The stable state is described by Equation \eqref{eq:dipolemagnetization} resulting in a magnetization $\mathbf{m}_\alpha$.
The particles' reorientation process reaches a stable state in time step $t_2$.
Shortly after this process is done the coils are deactivated in $t_3$.
In practice, it takes a short time interval to fully disable the excitation coils, therefore the data acquisition is started at time step $t_4$ after full disappearance of $\mathbf{B}^\textbf{coil}$.
Note that in practice magnetic field sensors can only register changing fields over time.
Therefore the acquired data represents the change in the magnetic field $\Delta\mathbf{B}^\textbf{meas} = \mathbf{B}^\textbf{meas}_{t_4} - \mathbf{B}^\textbf{meas}_{t_5}$.
For this model we demand the following:
first the particle reach a full relaxed state in $t_5$, therefore $\mathbf{B}^\textbf{meas}_{t_5} = 0$, and second the switch off interval for the excitation coil can be ignored (i.e. $t_3=t_4$).
As a result we have $\Delta\mathbf{B}^\textbf{meas} = \mathbf{B}^\textbf{meas}_{t_3}$ as of Equation \eqref{eq:measmagnetization}.

In short hand notation the measured magnetic field $\mathbf{B}^\textbf{meas}_\alpha(\sigma)$ in $\sigma$ induced by coil $\alpha$ is

\begin{equation}
	\mathbf{B}^\textbf{meas}_\alpha(\sigma) = \mathbf{k}_\alpha(w,\sigma)c(w)\label{eq:modelwithkernel}
\end{equation}


where we define the kernel of the measurement process as

\begin{equation}\label{eq:mrxikernel}
	\mathbf{k}_\alpha(w,\sigma)= \sigma_n\cdot\left(\left(\frac{3(\sigma_x-w)\otimes(\sigma_x-w)}{\left\vert \sigma_x-w\right\vert^5}-\frac{\mathbb{I}}{\left\vert \sigma_x-w\right\vert^3}\right)\mathbf{B}_\alpha^\textbf{coil}(w)\right).
\end{equation} 

Now we define the forward Operator $\mathbf{K}_\alpha$ for a single given coil activation $\alpha$.
Here, the sensor measures the combined response of all particles, hence using \eqref{eq:modelwithkernel} we have
\begin{equation}
\mathbf{K}_\alpha\colon\mathcal{L}^2(\Omega) \rightarrow\mathcal{L}^2(\Sigma), \quad
c \mapsto\left[\sigma\mapsto\int\limits_\Omega\mathbf{k}_\alpha(w,\sigma)c(w)d^3w\right]\label{eq:fredholmformulation}
\end{equation}

We finally mention that obviously in reality the magnetic fields of all particles directly combine and produce the directional measurement of the sensors together.
In our derivation this means that we first need the convolutional integral and then evaluate at $\sigma_x$ and take the scalar product with $\sigma_n$. However, due to the linearity of the operations this procedure is equivalent to our derivation leading directly to the kernel function ${\bf k}_\alpha$.

Mathematically $\mathbf{K}_\alpha$ maps a particle density $c$ to elements of the measurement space $\mathcal{L}^2(\Sigma)$. Note that the support of the activating coils as well as the sensors are located outside $\Omega$, hence $k_\alpha$ is a bounded integral kernel. Thus, the well-definedness and boundedness of $\mathbf{K}_\alpha$ in the above spaces follows from known results on Fredholm integral operators (cf. \cite{Engl2013}). The full forward operator $\mathbf{K}$ is then the collection of all $\mathbf{K}_\alpha$ and the inverse problem consists in (approximatively) inverting $\mathbf{K}$.

So far the set $\mathcal{A}$ parameterizing the activations was rather general, hence we discuss the possible strategies for activating the coils, which determine $\mathcal{A}$.
Note that this is also important for a sound mathematical formulation, since in a general formulation of activations solely parametrized by an index set $\mathcal{A}$ it is not even clear in which spaces to define measurements respectively how to set up the operator $\mathbf{K}$. The first option is to simply take $\mathcal{A}$ as a finite set, corresponding to the practical realization of a measurement. However, the shape and even size for this set may change in different experiments, since one is rather free about how and where to place the coils. In order to fully exploit the capabilities of MRX Imaging it seems more reasonable to construct a continuous model and interpret the practical model as a sampling thereof. For this sake we notice that there are mainly three options for varying the coils: the type of the coil (shape of the curve $\varphi_\alpha$), its position in space (corresponding to the center of mass $y \in (\R^3 \setminus \Omega)$ of $\varphi_\alpha$) and the orientation $\eta \in \mathcal{S}^2$ (corresponding to a rotation of the coil). It seems reasonable to assume that there is only a finite number $M$ of different coil types and that activations are not carried out at arbitrary high distance from $\Omega$. The possible sensors are collected in a bounded subset of $\Sigma=(\R^3\setminus\Omega) \times \mathcal{S}^2$ already. Hence we can encode the actual measurements into a probability measure $\mu$ with compact support on 
$$ \mathcal{M}:=\underbrace{\{1,\ldots,M\} \times \Sigma}_{\text{coils}} \times \underbrace{\Sigma}_{\text{sensors}},$$ 
where we assume that $\mu$ is a product measure of the form $ \mu = \mu^{coil} \otimes \mu^{meas} $, where $\mu^{coil}$ is a probability measure on  $\{1,\ldots,M\} \times \Sigma$ and $\mu^{meas}$ a probability measure on $\Sigma$.
Then we can define the forward operator 
$$ \mathbf{K} \colon\mathcal{L}^2(\Omega)\rightarrow\mathcal{L}^2(\mathcal{M}; \mu ). $$ 
The case of a finite number of measurements is then a specific realization that we obtain by choosing $\mu^{coil}$ respectively $\mu^{meas}$ as concentrated measure.
Noticing that 
\begin{align*}
  \Vert \mathbf{K} c \Vert_{\mathcal{L}^2 ( \mathcal{M}; \mu )}^2 &= 
\int \limits_{\{1,\ldots,M\} \times \Sigma} \Vert \mathbf{K}_\alpha c \Vert_{\mathcal{L}^2
( \Sigma; \mu^{meas} )}^2 ~d\mu^{coil}(\alpha)\\
&\leq \sup_{\alpha} \Vert \mathbf{K}_\alpha \Vert_{\mathcal{L}^2
( \Sigma; \mu^{meas} )}^2,
\end{align*}
boundedness of $\mathbf{K}$ follows from the uniform boundedness of the operators $\mathbf{K}_\alpha$, which is a straightforward estimate if all possible coil locations are outside $\Omega$. Indeed, by analogous arguments we can even show that the extension 
\begin{align*}
\Vert \mathbf{K} c \Vert_{\mathcal{L}^1 ( \mathcal{M}; \mu )}^2 &= 
\int \limits_{\{1,\ldots,M\} \times \Sigma} \Vert \mathbf{K}_\alpha c \Vert_{\mathcal{L}^1
( \Sigma; \mu^{meas} )}^2 ~d\mu^{coil}(\alpha)\\
&\leq \sup_{\alpha}\Vert \mathbf{K}_\alpha \Vert_{\mathcal{L}^1
( \Sigma; \mu^{meas} )}^2
\end{align*}
is well-defined and bounded.
 
In practice one would sometimes like to use multiple coil activations within the set of coils $\mathcal{A}$ simultaneously, also allowing different coil currents to vary the resulting field strength. Let $\mathcal{B}$ the set of all possible activation patterns. Then we can introduce weighting parameters $\omega_\alpha^\beta \in\R_{\geq0}$, where $\beta\in\mathcal{B}$ defines a specific pattern, and write
\begin{equation} \label{eq:multipleactivation}
\mathbf{\tilde K} = \left(\sum\limits_{\alpha\in\mathcal{A}}\omega_\alpha^\beta \mathbf{K}_\alpha \right)_{\beta \in \mathcal{B}}.
\end{equation}
As a result the choice of the weights $\omega_\alpha^\beta$ defines a specific activation pattern. Note that even if $\mathcal{A}$ is not a finite set, we will only have a finite number of nonzero weights in any measurement, the above sums have thus to be interpreted as finite ones and no convergence issues arise. The case of directly measuring $\mathbf{K}_\alpha$ is of course a special case where one weight is equal to one and the others are vanishing.

\subsection{An Idealized Mathematical Model}
\label{ssec:idealmodel}
In order to gain further understanding of the capabilities and mathematical structure of MRXI, it will be useful to study an idealized model that does not need to take into account the fine details of the coil. We approximate the activation by the limit of a small coil, with $y_\alpha$ the center of mass of $\varphi_\alpha$. Then we can approximate
\begin{align*}
	\int\limits_0^{L_\alpha}\varphi_\alpha^\prime(s)\times\left(\frac{w-\varphi_\alpha(s)}{\left\vert w-\varphi_\alpha(s)\right\vert^3}\right)ds &\approx \int\limits_0^{L_\alpha}\varphi_\alpha^\prime(s)\times\left(\frac{w-y_\alpha}{\left\vert w-y_\alpha\right\vert^3}\right)ds \\
	&= \eta_\alpha \times\left(\frac{w-y_\alpha}{\left\vert w-y_\alpha\right\vert^3}\right)
\end{align*}
with 
$ \eta_\alpha = \int\limits_0^{L_\alpha}\varphi_\alpha^\prime(s)~ds$ the orientation of the approximated coil.
Thus, further ignoring multiplicative constants, we can write an idealized measurement $b_\alpha$ related to the activation $y_\alpha$ and the vector $\eta_\alpha$ as 
$$	b_\alpha(\sigma)= \sigma_n \cdot\!\int_\Omega\!\!\left(\frac{3(\sigma_x-w)\otimes(\sigma_x-w)}{\left\vert \sigma_x-w\right\vert^5}-\frac{\mathbb{I}}{\left\vert \sigma_x-w\right\vert^3}\right)\!\!\left( \eta_\alpha \times\left(\frac{w-y_\alpha}{\left\vert w-y_\alpha\right\vert^3}\right)\right)\!\!c(w)~\!dw. $$
The formula for the measurement can be brought into a more compact notation by employing the fundamental solution $\gamma(x) = \frac{1}{4 \pi \vert x \vert}$ of the Laplace equation (cf. \cite[Chapter 2.2]{evans1998partial}) and noticing that 
$$ \nabla_x\gamma(x) = - \frac{1}{4\pi}\frac{x}{\vert x\vert^3} $$
and
$$ \nabla_x\nabla_x\gamma(x) = \frac{1}{4\pi}\frac{3x\otimes x}{\vert x\vert^5}-\frac{\mathbb{I}}{\vert x\vert^3}.$$
Hence, we obtain
$$	b_\alpha(\sigma)=  16 \pi^2 \sigma_n \cdot \int_\Omega \nabla_{(\sigma-w)}\nabla_{(\sigma-w)} \gamma(\sigma_x - w) \cdot
	\left( \eta_\alpha \times \nabla_{(w-y_\alpha)} \gamma(w-y_\alpha) \right) c(w) ~dw.
$$
Note that the following identities hold for the fundamental solution $\gamma$:
$$ \nabla_{(x_1-x_2)}\gamma(x_1-x_2)=\nabla_{x_1}\gamma(x_1-x_2)=-\nabla_{x_2}\gamma(x_1-x_2). $$
By using these identities, integration by parts and the compact support of $c$ in $\Omega$ we get
\begin{align*}
	 b_\alpha(\sigma)&= -16 \pi^2 \sigma_n \cdot \nabla_{\sigma_x} \int_\Omega  \nabla_w\gamma(\sigma_x - w) \left(  c(w) \eta_\alpha \times \nabla_w \gamma(w-y_\alpha) \right) ~dw\\
&= -16 \pi^2 \sigma_n \cdot \nabla_{\sigma_x} \int_\Omega  \gamma(\sigma_x - w) \nabla_w \cdot \left(  c(w) \eta_\alpha \times \nabla_w \gamma(w-y_\alpha) \right) ~dw\\
&= -16 \pi^2 \sigma_n \cdot \nabla_{\sigma_x} U(\sigma_x;y_\alpha,\eta_\alpha) .
\end{align*}
Using the properties of the fundamental solution $\gamma$ we can characterize $U(\cdot;y_\alpha,\eta_\alpha)$ as the unique solution of
\begin{equation} \label{eq:uequation}
	- \Delta U = \nabla \cdot 
\left(  c A \right) \qquad \text{in } \R^3,
\end{equation}
decaying at infinity, with the activation vector field 
$$ A(x) = \eta_\alpha \times \nabla  \gamma(x-y_\alpha). $$

In the following we further assume that the coils can be arranged around a hypersurface $\Gamma \subset \partial \Omega$ and that coils with three different orientations spanning the whole $\R^3$ are available. This means that one indeed measures  $16\pi^2\nabla U(\cdot;y_\alpha,\eta_\alpha)$ effectively.
 Finally, ignoring known scaling constants we can assume that $\eta_\alpha$ is normalized and the measurement corresponds directly to $\nabla U$ on $\Gamma$. This leads to the following idealized problem, that will be the basis of further analysis:

\begin{quote}
{\bf Idealized Inverse Problem:}\\
Given measurements of $\nabla U(\cdot;y,\eta)$ on $\Gamma$ for a set of activations $(y,\eta) \in \Theta \subset \Sigma$, where $U$ is the solution of Equation \eqref{eq:uequation}, determine the magnetic particle density $c$ compactly supported in $\Omega$.
\end{quote}

We see that in this setting the problem shares similarities to inverse source problems (cf. \cite{Isakov1}). Indeed basic unique continuation results for the Laplace equation from Cauchy data on a hypersurface (or even a stronger result from the knowledge of $|\nabla U|$ only, cf. \cite[Lemma 2.1.1]{Isakov1}) show that from such data $U$ is uniquely determined in $\R^3 \setminus \Omega$. Another analogy can be drawn to inverse problems in fluorescence tomography (cf. \cite{egger2010forward}). By rewriting the activation via the solution of another Poisson equation, we thus have the system
\begin{align}
	\begin{split}
	- \Delta V &= \delta_{y} \\
	-\Delta U &= \nabla \cdot ( c (\eta \times \nabla V)) = - \nabla \cdot ( c (\nabla \times (V \eta))).
	\end{split}
\end{align}
Apart from the fact that elliptic operators on bounded domains with more complicated coefficients are used in fluorescence tomography, the key difference is the way of activation. In fluorescence tomography the right-hand side in the second equation is of the form $c V$, while here we find a non-scalar version mediated by the effective coil orientation $\eta$. 

We mention that an analogous formulation is possible for the original forward model, however there is no equivalent of the scalar potential $V$ and we need to write an equation for a vector field $W$ corresponding to $-V \eta$ in the above formulation. Noticing that
\begin{align*}
	\frac{1}{4\pi} \int\limits_0^{L_\alpha}\varphi_\alpha^\prime(s)\times\left(\frac{w-\varphi_\alpha(s)}{\left\vert w-\varphi_\alpha(s)\right\vert^3}\right)~ds 
	&= - \int\limits_0^{L_\alpha}\varphi_\alpha^\prime(s)\times\left(\nabla_w\gamma(w-\varphi_\alpha(s))\right)~ds \\
	&= - \nabla_w \times  \int\limits_0^{L_\alpha}\varphi_\alpha^\prime(s) \gamma( w-\varphi_\alpha(s))~ds
\end{align*} 
since $\nabla_w \times(\varphi_\alpha^\prime(s))=0$. Then we can write
\begin{align}
	\begin{split}
		- \Delta W &= \epsilon_{\alpha} \\
	-\Delta U &= \nabla \cdot ( c (\nabla \times W))
	\end{split}
\end{align}
with the vectorial distribution 
$$ \epsilon_{\alpha}: \psi \mapsto \int\limits_0^{L_\alpha}\varphi_\alpha^\prime(s) \psi( \varphi_\alpha(s))ds. $$

\subsection{Idealized Dipole Model}

The asymptotic analysis in the previous section is based on the implicit assumption that $\eta_\alpha \neq 0$, otherwise no activation is left at leading order. Since $\eta_\alpha = 0$ may happen in practice, in particular for any coil represented by a closed curve, we further discuss this case in the following. The appropriate model arises from the first order expansion of 
\begin{align*}
	\frac{w-\varphi_\alpha(s)}{\left\vert w-\varphi_\alpha(s)\right\vert^3} \approx \left(\frac{w-y_\alpha}{\left\vert w-y_\alpha\right\vert^3}\right) - \nabla \left(\frac{w-y_\alpha}{\left\vert w-y_\alpha\right\vert^3}\right) (\varphi_\alpha(s) - y_\alpha).
\end{align*}
With $\eta_\alpha = \int_0^{L_\alpha}\varphi_\alpha^\prime(s)ds = 0$ this leads to
\begin{align*}
	\int\limits_0^{L_\alpha}\varphi_\alpha^\prime(s)&\times\left(\frac{w-\varphi_\alpha(s)}{\left\vert w-\varphi_\alpha(s)\right\vert^3}\right)ds\\
	&= - \int\limits_0^{L_\alpha}\varphi_\alpha^\prime(s)\times \left( \nabla \left(\frac{w-y_\alpha}{\left\vert w-y_\alpha\right\vert^3}\right) (\varphi_\alpha(s) - y_\alpha)\right) ds \\
	&= \nabla \times ( M_\alpha \nabla \gamma(w-y_\alpha))\\
	&= \nabla \times (\nabla \cdot ( M_\alpha \gamma(w-y_\alpha))) 
\end{align*}
with the matrix 
$$ M_\alpha  = 4\pi  \int\limits_0^{L_\alpha}\varphi_\alpha^\prime(s)\otimes (\varphi_\alpha(s) - y_\alpha)~ds . $$

With the notations of the previous section we can also rewrite a slightly changed formula for the activation and obtain the forward solution $U$ now with the changed activation model
\begin{align}
	\begin{split}
		- \Delta W &= \nabla \cdot (M_\alpha \delta_y) \\
		-\Delta U &= \nabla \cdot ( c (\nabla \times W)),
	\end{split}
	\label{eq:uequation2}
\end{align}
which actually corresponds to a magnetic dipole at $\delta_y$. 
\begin{quote}

{\bf Idealized Dipole Inverse Problem:}\\
Given measurements of $\nabla U(\cdot;y,\eta)$ on $\Gamma$ for a set of activations $(y,M) \in \Theta \subset (\R^3\setminus\Omega) \times \mathbb{R}^{3\times 3}$, where $U$ is the solution of Equation \eqref{eq:uequation2}, determine the magnetic particle density $c$ compactly supported in $\Omega$.

\end{quote} 
The analysis of this inverse problem shares many similarities with the idealized model above and will hence not be discussed in detail in the following. However, we will use the two dimensional version of the dipole model for the first set of numerical experiments.

\section{The Inverse Problem of MRXI}\label{sec:inverseproblem}

In the following we further outline some properties of the inverse problem in MRXI, discuss possible variational regularizations and constraints, and finally provide a more detailed analysis of the idealized inverse problem.

\subsection{Ill-Posedness of the Inverse Problem} 

The modeling above provides an operator $\mathbf{K}$ for a given activation pattern $(\omega)_\alpha$. With knowledge of the measurements $g$ we can denote an operator linear in the particle distribution $c$. Then we have a standard linear inverse problem in the form of the operator equation
\begin{align*}
\mathbf{K}c=g.
\end{align*}
As mentioned above we can see each integral operator $\mathbf{K}_\alpha$ as a Fredholm operator of the first kind (see Equation \eqref{eq:fredholmformulation}) with bounded kernel. Then the operator $\mathbf{K}_\alpha\colon\mathcal{L}^2(\Omega)\rightarrow\mathcal{L}^2(\Sigma)$ is bounded and compact (cf. \cite{Engl2013}). If the set of all coils $\mathcal{A}$ is finite it directly follows that $\mathbf{K}$ is compact as well on the corresponding product topology and hence the inverse problem is ill-posed. 
Similar arguments also hold for other versions of the index set $\mathcal{A}$ discussed before. We mention that in the realistic case of both activation coils and sensors being outside the region of interest, the operator $\mathbf{K}$ is a Fredholm integral operator with analytic kernel, hence the inverse problem is severely ill-posed.

\subsection{Variational Regularization}

In order to compute stable approximations of the solution despite the ill-posedness, we use the popular approach of variational regularization methods, i.e. we look for
\begin{equation}
	c^\ast=\arg\min_c \frac{1}{2}\Vert \mathbf{K}c-g\Vert^2_2 + \alpha R(c).\label{eq:generalvariationalmodel}
\end{equation}
In general this is a flexible solution approach, allowing to include prior knowledge and further constraints to the solution of the original problem. For example, the assumption of a smooth solution can be included as prior knowledge leading to
\begin{align}
\arg\min_c\frac{1}{2}\Vert \mathbf{K}c-g\Vert^2 + \alpha\Vert\nabla c\Vert^2_{\mathcal{L}^2(\Omega)}
\end{align}
as an explicit example commonly known as first-order Tikhonov regularization. Since Tikhonov regularization and methods producing similar results are widely used in practice, we will consider it as state-of-the-art method and use it for comparison with the reconstruction method proposed in this paper. In general, the choice of a penalization term depends on the considered problem and often originates from physical principles or constraints and includes system relevant properties.

For MRXI, particles are distributed into the region of interest. This region of interest may consist of various materials with individual physical properties that yield different characteristic densities. Due to the limit resolution of MRXI we do not expect to resolve local variations in the density, but rather focus on reconstructing sharp edges between different kinds of tissue and assume that the magnetic nanoparticles will distribute homogeneously in a certain tissue.
It is well known that the special property of constant regions and (mainly) sharp edges is supported using Total Variation regularization (cf. \cite{chambolle2010introduction,burger2013guide}). Hence, we incorporate the Total Variation seminorm
\begin{equation}
	TV(c) = \sup_{\psi \in C_0^\infty(\Omega)^3, \Vert \psi \Vert_\infty \leq 1} \int_\Omega c(x) \nabla \cdot\psi(x)~dx
\end{equation}
as a regularization term in our variation model \eqref{eq:generalvariationalmodel}.
In addition we incorporate the natural constraint that
a density function is, from a physical point of view, a nonnegative function. Thus we restrict the minimization to nonnegative functions. In order to incorporate a constraint into the variational model we employ
the characteristic function of a convex set $C$, i.e. 
\begin{align}
\chi_C(x) = \left\{\begin{array}{cl}
0\qquad&\text{if}\quad x\in C\\
\infty&\text{else}
\end{array}\right..
\end{align}
To implement a nonnegativity constraint $c\geq0$ on the particle distribution $c$ we define $C=\{x\vert x\geq0\}$ and consider $\chi_C(c)$,  or simply write $\chi_{\geq 0}(c)$.
This leads to the variational model
\begin{align}
c^\ast \in \arg\min_c\frac{1}{2}\Vert \mathbf{K}c-g\Vert^2 + \alpha\operatorname{TV}(c) + \chi_{\geq 0}(c)\label{eq:variationModel}
\end{align}

Indeed this problem is well-defined, i.e. a nonnegative minimizer $c^\ast$ exists for any $\alpha > 0$ in 
$$  BV(\Omega) = \{ c \in \mathcal{L}^1(\Omega)~|~ TV(c) < \infty ~\}.
$$
This follows by minor modifications from standard existence results for Total Variation regularization (cf. e.g. \cite{Acar1994,burger2013guide}), using the compact embedding of $BV(\Omega)$ into $\mathcal{L}^1(\Omega)$ and the boundedness of the forward operator $\mathbf{K}$ on the latter space.

Instead of the variational method, which is known to produce a rather strong bias (a loss of contrast in the case of total variation regularization, cf. \cite{benning2013ground,burger2013guide}) we can employ the Bregman iteration as an iterative regularization method (cf. \cite{Osher2005}). This means for $c^0=0$, $g^0=g$ and $\alpha$ fixed and large, we iteratively compute a sequence of reconstructions
	\begin{align}
c^{k+1} &\in \arg\min_c\frac{1}{2}\Vert \mathbf{K}c-g^k\Vert^2 + \alpha\operatorname{TV}(c) + \chi_{\geq 0}(c) \\
g^{k+1} &= g^k + g - Kc^{k+1}. 
\end{align}
The regularizing effect arises from an appropriate stopping of the iteration, where previous analysis indicates that $\alpha$ times the number of Bregman iterations corresponds to the regularization parameter in the regularization method, however with a reduction of the bias compared to the variational method (cf. \cite{benning2013ground,burger2013guide}). If an estimate of the size of the noise is available, one can easily choose a stopping index by the discrepancy principle or a similar method.

\subsection{Identifiability Analysis of the Idealized Problem}

We finally turn to a more detailed analysis of the idealized problem formulated in Subsection \ref{ssec:idealmodel}. In particular we are interested in the uniqueness or possible non-uniqueness in the determination of the density $c$ from different activation strategies. We will consider two extreme cases in order to understand when uniqueness can hold or fail in such a problem:
\begin{itemize}

\item {\bf Full activation:} in this scenario we assume that activation is carried out at any $y \in {\cal O} \subset\R^3 \setminus \Omega$ for an open set ${\cal O}$, with two linearly independent orientations $\eta_1(y)$ and $\eta_2(y)$ for every $y$. 

\item {\bf Far-field activation:} in this scenario we assume that activation is carried out at any $y \in \partial B_R(0)$ in the limit $R \rightarrow \infty$, again with two linearly independent orientations $\eta_1(y)$ and $\eta_2(y)$ for every $y$.
\end{itemize}

We start with a rather standard reciprocity principle that holds for any kind of activation:
\begin{lem} \label{lem:reciprocity} 
Let $c_1, c_2 \in \mathcal{L}^1(\Omega)$ be nonnegative densities with compact support in $\Omega$, such that the potentials $U_i$ related to $c_i$ satisfy 
\begin{equation}
	\nabla U_1(\cdot;y,\eta) = \nabla U_2(\cdot;y,\eta) 
\end{equation}
on a $C^1$-surface $\Gamma$ outside $\Omega$. Then for all $z \in \mathbb{R}^3 \setminus \Omega$ and $\tilde c = c_1-c_2$ the identity
\begin{align}
\begin{split}
	0&= \int_\Omega \tilde c(x) \nabla \gamma(x-z) \cdot ( \eta \times \nabla \gamma(x-y)) ~dx\\
	& = \int_\Omega \tilde c(x) \nabla \gamma(x-y) \cdot ( \eta \times \nabla \gamma(x-z)) ~dx\label{eq:reciprocity}
\end{split}
\end{align}
holds, with $\gamma=\frac{1}{4\pi\vert x\vert}$ fundamental solution of the Laplace equation.
\end{lem}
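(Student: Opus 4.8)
The plan is to reduce the statement to a unique-continuation argument for the difference potential and then to read off the two identities from an explicit integral representation of $U$.

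First I would set $U := U_1 - U_2$, so that $U$ is the Newtonian potential of the source $f := \nabla\cdot(\tilde c\, A)$ with $A(x) = \eta\times\nabla\gamma(x-y)$; that is, $U = \gamma * f$. Since $-\Delta\gamma = \delta$, this $U$ solves $-\Delta U = f$ and decays at infinity because $\tilde c$ has compact support in $\Omega$ (indeed $\int_{\R^3} f = 0$, so the decay is at least that of a dipole). Crucially, as $\tilde c$ is supported in $\Omega$ and $y \notin \Omega$, the source $f$ is supported in $\overline\Omega$, so $U$ is harmonic in $\R^3\setminus\overline\Omega$, in particular in a neighborhood of the surface $\Gamma$.

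Next I would exploit the hypothesis $\nabla U_1 = \nabla U_2$ on $\Gamma$, i.e. $\nabla U = 0$ on $\Gamma$. The vanishing of the normal component gives $\partial_\nu U = 0$, while the vanishing tangential components force $U$ to be constant on each connected piece of $\Gamma$; subtracting this constant yields a harmonic function with vanishing Cauchy data on $\Gamma$. By unique continuation for harmonic functions (their real-analyticity, or the Cauchy-data result cited as \cite[Lemma 2.1.1]{Isakov1}) this function vanishes on the connected component of $\R^3\setminus\overline\Omega$ adjacent to $\Gamma$; the decay at infinity then fixes the additive constant to be zero, so $U \equiv 0$ throughout the exterior $\R^3\setminus\overline\Omega$. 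Hence $U(z;y,\eta) = 0$ for every $z\in\R^3\setminus\Omega$, not merely for $z\in\Gamma$.

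It then remains to rewrite $U(z;y,\eta)=0$ as the claimed integral. Pairing the distributional identity $U(z;y,\eta) = \int_\Omega \gamma(z-w)\,\nabla_w\cdot(\tilde c(w) A(w))\,dw$ with the test function $\gamma(z-\cdot)$, which is smooth on $\Omega$ because $z\notin\Omega$, I would integrate by parts; the compact support of $\tilde c$ in $\Omega$ kills the boundary term. Using the oddness $\nabla\gamma(z-w) = -\nabla\gamma(w-z)$ together with $\nabla_w\gamma(w-y) = \nabla\gamma(w-y)$, this turns $U(z;y,\eta)=0$ into the first equality of \eqref{eq:reciprocity}. Writing $I_1,I_2$ for the first and second integrals in \eqref{eq:reciprocity}, the second equality follows from the antisymmetry of the scalar triple product under exchanging its first and last entries, which gives $\nabla\gamma(x-y)\cdot(\eta\times\nabla\gamma(x-z)) = -\nabla\gamma(x-z)\cdot(\eta\times\nabla\gamma(x-y))$ pointwise; since $I_1=0$, also $I_2=0$, completing the chain $0=I_1=I_2$.

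The routine parts are the integration by parts and the triple-product algebra. The main obstacle I anticipate is the unique-continuation step: one must ensure that $\R^3\setminus\overline\Omega$ is connected (or argue component by component), correctly handle the additive constant via the decay of the Newtonian potential, and verify that the $\mathcal{L}^1$-regularity of $\tilde c$ suffices to treat $U$ as a genuine harmonic function near $\Gamma$ with well-defined Cauchy data.
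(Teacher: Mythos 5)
Your proposal is correct and takes essentially the same route as the paper: first unique continuation from $\nabla \tilde U = 0$ on $\Gamma$ to conclude $\tilde U \equiv 0$ outside $\Omega$, then integration by parts against the harmonic kernel $\gamma(\cdot - z)$ (the paper phrases this as testing a weak formulation on $\Omega$ with $\varphi = \gamma(\cdot - z)$, you as evaluating the Newtonian-potential representation at the exterior point $z$ --- the same computation), and finally the scalar-triple-product antisymmetry for the second equality. Your extra care in fixing the additive constant via decay at infinity handles a detail the paper leaves implicit, but it does not change the argument.
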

\begin{proof}
First of all, due to the compact support of $\tilde c$ in $\Omega$, the potential $\tilde U = U_1 - U_2$ is a smooth harmonic function in $\mathbb{R}^3 \setminus \Omega$. As explained in Section 2, standard unique continuation for harmonic functions then implies $\tilde U \equiv 0$ in $\mathbb{R}^3 \setminus \Omega$. Moreover, $\tilde U$ is a weak solution of the Poisson equation on $\Omega$, more precisely for any function $\varphi \in C^\infty(\Omega)$ we have 
$$ - \int_\Omega \tilde U(x) \Delta \varphi(x) ~dx = \int_\Omega \tilde c (\eta \times \nabla \gamma(x-y)) \cdot \nabla \varphi(x)~dx. $$
Note that we do not require compact support of $\varphi$ due to the vanishing Cauchy data of $\tilde U$ and the compact support of $\tilde C$. Thus, for $z$ with positive distance to $\Omega$, $\varphi=\gamma(\cdot-z)$ is a suitable test function, harmonic in $\Omega$. This implies the second identity in \eqref{eq:reciprocity} for such $z$. Using the compact support of $\tilde c$ in $\Omega$ it is straightforward to extend to all $z \in \mathbb{R}^3$ by an approximation argument.
The first identity follows from an elementary vector identity.
\end{proof} 

Now let us further characterize the type of measurements we find in the different application scenarios: 
\begin{lem} \label{lem:fullactivation}
Let the set of activations be chosen according to the full activation scenario. Then with the assumptions of Lemma \eqref{lem:reciprocity} we have
\begin{equation}
	0 = \int_\Omega \tilde c(x) \frac{x-y}{|x-y|^6} \cdot ( \eta_i(y) \times  e) ~dx \label{eq:fullactivationidentity}
\end{equation}
for all $y \in {\cal O} \subset \mathbb{R}^3 \setminus \Omega$, $i=1,2$, and $e \in \mathcal{S}^2$.
\end{lem}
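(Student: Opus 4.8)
The plan is to start from the reciprocity identity \eqref{eq:reciprocity} of Lemma \ref{lem:reciprocity}, which holds for the fixed activation data $y$, $\eta = \eta_i(y)$ and for \emph{every} auxiliary point $z$ in a neighbourhood of $y$ inside $\mathbb{R}^3 \setminus \Omega$, and to extract \eqref{eq:fullactivationidentity} by differentiating in $z$ and then letting $z$ coincide with $y$. Substituting $\nabla\gamma(x-z) = -\tfrac{1}{4\pi}\tfrac{x-z}{|x-z|^3}$ into the first line of \eqref{eq:reciprocity}, the relation reads, up to the harmless positive constant $\tfrac{1}{16\pi^2}$,
\begin{equation*}
	G(z) := \int_\Omega \tilde c(x)\,\frac{x-z}{|x-z|^3}\cdot\left(\eta\times\frac{x-y}{|x-y|^3}\right)~dx = 0 .
\end{equation*}
Since the full activation scenario supplies data for all $y$ in the open set $\mathcal{O}$, and the integrand is smooth in $z$ for $z$ near $y$ (the only singularity sits at $x = z \notin \Omega$, whereas $\supp\tilde c$ is compactly contained in $\Omega$), the function $G$ vanishes identically on a neighbourhood of $z = y$, so all its $z$-derivatives vanish there as well.

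First I would take the directional derivative of $G$ in an arbitrary direction $e \in \mathcal{S}^2$, justified by differentiation under the integral sign because the $z$-derivatives of the kernel stay uniformly bounded for $x \in \supp\tilde c$ and $z$ near $y$. This brings down the Hessian $\nabla\nabla\gamma(x-z)$, contracted against $e$ and against the fixed vector field $w(x) = \eta\times\nabla\gamma(x-y)$. Evaluating at $z = y$ and writing $r = x-y$, I would use $\nabla\nabla\gamma(r) = \frac{1}{4\pi}\left(\frac{3\,r\otimes r}{|r|^5} - \frac{\mathbb{I}}{|r|^3}\right)$ together with the crucial cancellation $r\cdot(\eta\times r) = 0$, which annihilates the rank-one part $\frac{3\,r\otimes r}{|r|^5}$ acting on $w$. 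Only the isotropic part survives, giving $\nabla\nabla\gamma(r)\,w(x) = \frac{1}{16\pi^2}\frac{\eta\times r}{|r|^6}$.

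Contracting this with $e$ and applying the scalar triple product identity $e\cdot(\eta\times r) = -\,(x-y)\cdot(\eta\times e)$ turns the vanishing directional derivative directly into
\begin{equation*}
	0 = \int_\Omega \tilde c(x)\,\frac{x-y}{|x-y|^6}\cdot(\eta\times e)~dx,
\end{equation*}
which is exactly \eqref{eq:fullactivationidentity}; running the argument with $\eta = \eta_1(y)$ and $\eta = \eta_2(y)$ covers $i=1,2$, and since the identity is linear in $e$ it holds for every $e \in \mathcal{S}^2$. The only real obstacle is bookkeeping: justifying that the single $z$-derivative may be evaluated precisely at $z=y$ (legitimate because $y$ lies outside $\Omega$ while $\supp\tilde c$ is compactly contained in $\Omega$, so no singularity is crossed) and tracking the constants and signs through the Hessian contraction. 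The genuinely load-bearing point, rather than a difficulty, is the orthogonality $r\cdot(\eta\times r)=0$: it collapses the dipole kernel to the clean weight $|x-y|^{-6}$ and is precisely what makes the diagonal limit $z\to y$ informative instead of trivially zero.
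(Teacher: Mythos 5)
Your proposal is correct and is essentially the paper's own proof: the paper likewise extracts \eqref{eq:fullactivationidentity} by taking the $z$-derivative of the reciprocity identity \eqref{eq:reciprocity} at $z=y$ in direction $e$, implemented there as the explicit difference quotient $z_e^\epsilon = y - \epsilon e$ with a limit $\epsilon \to 0$ rather than via the Hessian $\nabla\nabla\gamma$, and the load-bearing cancellation is the same orthogonality $(x-y)\cdot\left(\eta_i(y)\times(x-y)\right)=0$ that you use to kill the rank-one part of the dipole kernel. Your bookkeeping (the $\tfrac{1}{16\pi^2}$ constants, the triple-product sign flip, and the justification that no singularity is crossed since $\supp \tilde c \subset\subset \Omega$ and $y \notin \overline{\Omega}$) matches the paper's computation.
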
 
\begin{proof}
Given $y \in \mathbb{R}^3 \setminus \Omega$ with positive distance to $\Omega$ and $e \in \mathcal{S}^2$, choose $z_e^\epsilon:= y -\epsilon e \in \mathbb{R}^3 \setminus \Omega$ 
for $\epsilon$ sufficiently small. From \eqref{eq:reciprocity} and the definition of $\gamma$ we see that 

\begin{align*}
0 & = \lim_{\epsilon \rightarrow 0} \left( \frac{16\pi^2}{\epsilon} \int_\Omega \tilde c(x) \frac{x-y}{|x-y|^3} \cdot ( \eta_i(y) \times \frac{x-z_e^\epsilon}{|x-z_e^\epsilon|^3} ) ~dx \right)\span\\
& = \lim_{\epsilon \rightarrow 0} \left( \frac{16\pi^2}{\epsilon} \int_\Omega \tilde c(x) \frac{1}{|x-y|^3|x-z_e^\epsilon|^3}\right.\span\\
&& \left((x-y) \cdot \eta_i(y) \times (x-y) + (x-y) \cdot \eta_i(y) \times (x+\epsilon e)\right)~dx\bigg)\\
& = \lim_{\epsilon \rightarrow 0} \left( 16\pi^2 \int_\Omega \tilde c(x) \frac{1}{|x-y|^3|x-z_e^\epsilon|^3} \left((x-y) \cdot \eta_i(y) \times (\frac{x}{\epsilon}+ e)\right)~dx \right)\span
\end{align*}
leads to the right-hand side in \eqref{eq:fullactivationidentity} using orthogonality of $(x-y)$ to $\eta_i(y) \times (x-y)$ and for $\epsilon\rightarrow 0$.
\end{proof}

\begin{lem} \label{lem:farfieldactivation}
Let the set of activations be chosen according to the far-field activation scenario. Then with the assumptions of Lemma \eqref{lem:reciprocity} we have
\begin{align*}
&\lim_{|y| \rightarrow \infty} |y|^2	\int_\Omega \tilde c(x) \nabla \gamma(x-z) \cdot ( \eta \times \nabla \gamma(x-y)) ~dx\\
&\qquad= -
	\int_\Omega \tilde c(x) \frac{x-z}{|x-z|^3} \cdot ( \eta_i(y) \times  \frac{y}{|y|}) ~dx   
\end{align*}
and thus,
\begin{equation} \label{eq:Coulomb}
\int_\Omega \tilde c(x) \frac{1}{|x-z|} \cdot ( \eta_i(y) \times  \frac{y}{|y|}) ~dx  =0	
\end{equation}
for all $z \in \mathbb{R}^3 \setminus \Omega$. 
\end{lem}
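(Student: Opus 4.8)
The plan is to read off the far-field behaviour directly from the reciprocity identity \eqref{eq:reciprocity} of Lemma \ref{lem:reciprocity}. Under the hypothesis that the data of $c_1$ and $c_2$ agree for every activation, that identity gives
$$ \int_\Omega \tilde c(x)\, \nabla\gamma(x-z)\cdot\big(\eta\times\nabla\gamma(x-y)\big)~dx = 0 $$
for every $z \in \mathbb{R}^3\setminus\Omega$ and every admissible activation $(y,\eta)$. In the far-field scenario these activation points lie on $\partial B_R(0)$, so I would fix $z$ together with a limiting direction $\hat y = y/|y|$ and the associated orientation $\eta = \eta_i(y)$, and study the integrand as $|y| = R \to \infty$.

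First I would extract the leading-order asymptotics of the activation factor. Writing $\nabla\gamma(x-y) = -\frac{1}{4\pi}\frac{x-y}{|x-y|^3}$ and using that $x$ ranges over the bounded set $\Omega$ while $|y|\to\infty$, a binomial expansion of $|x-y|^{-3}$ gives $|x-y| = |y|\big(1+O(|y|^{-1})\big)$ and hence
$$ |y|^2\,\nabla\gamma(x-y) \longrightarrow \frac{1}{4\pi}\,\frac{y}{|y|} \qquad \text{uniformly in } x\in\Omega. $$
This is the key computation: it isolates the $1/|y|^2$ multipole coefficient that the far-field limit selects.

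Next I would multiply the vanishing reciprocity integral by $|y|^2$ — the product is still identically zero for every finite $y$ — and pass the limit under the integral sign. The interchange is justified by dominated convergence: since $z$ has positive distance to $\Omega$, the factor $\nabla\gamma(x-z)$ is bounded on $\Omega$; the estimate $|x-y| \ge |y| - \operatorname{diam}(\Omega)$ bounds $|y|^2|\nabla\gamma(x-y)|$ uniformly for large $|y|$; and $\tilde c \in \mathcal{L}^1(\Omega)$ supplies the integrable majorant. Substituting the above limit together with $\nabla\gamma(x-z) = -\frac{1}{4\pi}\frac{x-z}{|x-z|^3}$ then yields the first displayed identity of the lemma, up to the scaling constants suppressed throughout the paper. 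Since the pre-limit integral was exactly zero, its limit is zero as well, which is \eqref{eq:Coulomb} once one rewrites $\frac{x-z}{|x-z|^3} = \nabla_z\frac{1}{|x-z|}$, pulls the constant vector $\eta_i(y)\times \frac{y}{|y|}$ out of the $z$-gradient, and reads the result as a vanishing directional derivative of the Coulomb kernel $\frac{1}{|x-z|}$ integrated against $\tilde c$.

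The bulk of the work is bookkeeping rather than conceptual. The main point to get right is the uniform (in $x\in\Omega$) character of the asymptotic $|y|^2\nabla\gamma(x-y)\to \frac{1}{4\pi}\frac{y}{|y|}$, so that the dominated-convergence majorant is genuinely independent of $y$. A secondary subtlety is the meaning of $\eta_i(y)$ after the limit: one must either hold the direction $\hat y$ and its orientation fixed as $R\to\infty$ or pass to convergent subsequences. However, because the pre-limit quantity vanishes for each individual $y$, no uniformity in the orientation is needed and the conclusion \eqref{eq:Coulomb} follows for each limiting direction separately.
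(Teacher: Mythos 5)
Your treatment of the first identity is sound and follows the same route as the paper (which simply calls it ``a straightforward computation of the limit''): the uniform asymptotics $|y|^2\nabla\gamma(x-y)\to\frac{1}{4\pi}\,y/|y|$ on the bounded set $\Omega$, the bound $|x-y|\geq |y|-\operatorname{diam}(\Omega)$, and dominated convergence with $\tilde c\in\mathcal{L}^1(\Omega)$ are exactly the right ingredients, and your per-direction handling of $\eta_i(y)$ is at the same level of rigor as the paper itself. Combining the limit with the reciprocity identity \eqref{eq:reciprocity} to conclude that the limiting integral vanishes is also what the paper does.

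The gap is in your final step. What your argument actually yields is
\begin{equation*}
\nabla_z\!\left(\int_\Omega \tilde c(x)\,\frac{1}{|x-z|}\,dx\right)\cdot\left(\eta_i(y)\times\frac{y}{|y|}\right)=0
\qquad\text{for all } z\in\mathbb{R}^3\setminus\Omega,
\end{equation*}
i.e.\ a vanishing \emph{directional derivative} of the Coulomb potential of $\tilde c$, and you then declare this ``is'' \eqref{eq:Coulomb}. It is not: \eqref{eq:Coulomb} asserts that the potential itself (paired with the fixed vector $\eta_i(y)\times y/|y|$) vanishes, which is strictly stronger --- a function with vanishing directional derivative may still be a nonzero constant. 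The missing argument, which is precisely the closing step of the paper's proof, is to integrate the gradient and rule out the constant of integration: along any line in the direction $\eta_i(y)\times y/|y|$, the potential is constant on each connected portion of the line lying outside $\Omega$; every such portion contains an unbounded ray, and the Coulomb potential of the compactly supported $\tilde c$ decays to $0$ as $|z|\to\infty$, so each constant is $0$ and hence the potential vanishes at every $z\in\mathbb{R}^3\setminus\Omega$. Without this decay-at-infinity argument, \eqref{eq:Coulomb} does not follow from what you proved.
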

\begin{proof}
The first identity for the limit follows from a  straightforward computation of the limit. Together with \eqref{eq:reciprocity} we obtain 
$$ \nabla_z \int_\Omega \tilde c(x) \frac{1}{|x-z|} \cdot ( \eta_i(y) \times  \frac{y}{|y|}) ~dx  =0	 $$
for all $z \in \mathbb{R}^3 \setminus \Omega$. The asymptotic decay of the Coulomb potential for $|z| \rightarrow 0$ implies that there is no constant when integrating the gradient, i.e. \eqref{eq:Coulomb} holds.
\end{proof}

Since the measurement \eqref{eq:Coulomb} is known to be insufficient to determine $\tilde c$, which would amount exactly to the inverse source problem for the Poisson equation (cf. \cite{Isakov1}), we have to expect non-uniqueness in this activation scenario. Note that, strictly speaking, we have not given a non-uniqueness argument, since \eqref{eq:reciprocity} is only a sufficient condition and we have not used the nonnegativity of the densities $c_i$, but it appears natural that this case cannot suffice to uniquely determine the magnetic particle density.

In the case of full activation we have a different picture, there we can determine a suitable Riesz potential instead of the Coulomb potential, allowing to give a uniqueness result:

\begin{thm}
Let the set of activations be chosen according to the full activation scenario. Then the measurements of $\nabla U(\cdot;y,\eta_i(y))$, $i=1,2$, uniquely determine a nonnegative density $c \in \mathcal{L}^1(\Omega)$ with compact support.
\end{thm}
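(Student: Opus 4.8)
The plan is to argue by contradiction to uniqueness: assume two admissible densities $c_1,c_2$ produce identical measurements, set $\tilde c = c_1-c_2$, and show $\tilde c\equiv 0$. Both Lemma~\ref{lem:reciprocity} and Lemma~\ref{lem:fullactivation} apply directly and give
\[
 \int_\Omega \tilde c(x)\,\frac{x-y}{|x-y|^6}\cdot\bigl(\eta_i(y)\times e\bigr)\,dx = 0
\]
for all $y\in{\cal O}$, $i=1,2$ and $e\in\mathcal{S}^2$. The first step is to eliminate the orientations: for fixed $y$ the vectors $\eta_i(y)\times e$ sweep out, as $e$ ranges over $\mathcal{S}^2$, the plane orthogonal to $\eta_i(y)$; since $\eta_1(y)$ and $\eta_2(y)$ are linearly independent, the two planes $\eta_1(y)^\perp$ and $\eta_2(y)^\perp$ together span $\R^3$. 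Hence the fixed vector $\int_\Omega \tilde c(x)\,(x-y)\,|x-y|^{-6}\,dx$ is orthogonal to a spanning set of $\R^3$ and must vanish for every $y\in{\cal O}$.

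Next I would introduce the potential $w(y):=\int_\Omega \tilde c(x)\,|x-y|^{-4}\,dx$ and note that $\tfrac14\nabla_y|x-y|^{-4} = (x-y)\,|x-y|^{-6}$, so the identity above says precisely that $\nabla_y w\equiv 0$ on ${\cal O}$. Because $\tilde c$ is compactly supported in $\Omega$, the kernel and hence $w$ are real-analytic in $y$ on $\R^3\setminus\overline{\Omega}$; as $\nabla w$ is real-analytic and vanishes on the open set ${\cal O}$, it vanishes on the whole unbounded component of $\R^3\setminus\overline{\Omega}$. Thus $w$ is constant there, and the decay $w(y)\to 0$ as $|y|\to\infty$ forces $w\equiv 0$ on $\R^3\setminus\overline{\Omega}$ (assuming, as usual, that $\Omega$ has connected complement).

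To convert this exterior statement into information on $\tilde c$, I would pass to the order-one Riesz potential $v(y):=\int_\Omega \tilde c(x)\,|x-y|^{-2}\,dx$, which is well defined on all of $\R^3$ since $|x|^{-2}$ is locally integrable in three dimensions. Using $\Delta_y|x-y|^{-2}=2|x-y|^{-4}$ for $x\neq y$ and differentiating under the integral for $y\notin\overline{\Omega}$ gives $\Delta v = 2w = 0$ on the exterior, i.e.\ $v$ is harmonic and decaying there. Globally, $v$ equals $(-\Delta)^{-1/2}\tilde c$ up to a positive constant, so $\Delta v = -(-\Delta)^{1/2}\tilde c$ as tempered distributions; combining this with the exterior harmonicity shows that \emph{both} $\tilde c$ and $(-\Delta)^{1/2}\tilde c$ are supported in $\overline{\Omega}$.

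The main obstacle is the concluding injectivity step: deducing $\tilde c\equiv 0$ from the fact that $\tilde c$ and its half-Laplacian are simultaneously compactly supported. This is exactly what distinguishes the present Riesz situation from the Coulomb potential of the far-field scenario, where the associated Newtonian potential is harmonic outside $\Omega$ automatically and no information is gained, whereas here harmonicity of the order-one potential $v$ is a genuine constraint. I would close the argument via Paley--Wiener--Schwartz: $\hat{\tilde c}$ extends to an entire function, and on $\R^3$ the relation $\widehat{(-\Delta)^{1/2}\tilde c}(\xi)=|\xi|\,\hat{\tilde c}(\xi)$ would then also be the restriction of an entire function; restricting to rays through the origin and exploiting that $|\xi|$ is not real-analytic at $\xi=0$ forces the Taylor coefficients of $\hat{\tilde c}$ along each ray to vanish, whence $\hat{\tilde c}\equiv 0$ and $\tilde c\equiv 0$, so $c_1=c_2$. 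Equivalently, this last step can be phrased as strict positive definiteness and injectivity of the order-one Riesz potential on compactly supported densities, which I expect to be the genuinely technical point of the proof.
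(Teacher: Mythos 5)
Your proposal is correct, and it follows the paper's own route almost verbatim up to the last step: both arguments combine Lemma \ref{lem:reciprocity} and Lemma \ref{lem:fullactivation}, eliminate the orientations via the linear independence of $\eta_1(y),\eta_2(y)$, recognize the resulting vector identity as $\nabla_y R_4(y)=0$ for the Riesz potential $R_4(y)=\int_\Omega \tilde c(x)\vert x-y\vert^{-4}dx$, and then use real-analyticity outside $\overline{\Omega}$ together with decay at infinity to conclude $R_4\equiv 0$ on the exterior. Your handling of this common part is in fact slightly more careful than the paper's: the normalized cross products $\eta_i(y)\times e/\vert\eta_i(y)\times e\vert$, $e\in\mathcal{S}^2$, $i=1,2$, form a union of two great circles rather than all of $\mathcal{S}^2$ as the paper asserts, but, as you observe, the two orthogonal planes span $\R^3$, which is all that is needed; you also make the connectedness-of-the-complement assumption explicit. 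The genuine divergence is at the end: the paper disposes of the key injectivity step (exterior vanishing of $R_4$ implies $\tilde c\equiv 0$) with a citation to Isakov, whereas you prove it. Your argument -- pass to $v(y)=\int_\Omega\tilde c(x)\vert x-y\vert^{-2}dx$, use $\Delta_y\vert x-y\vert^{-2}=2\vert x-y\vert^{-4}$ to see $\Delta v=2R_4=0$ outside $\overline{\Omega}$, identify $-\Delta v$ with a positive multiple of $(-\Delta)^{1/2}\tilde c$, and conclude via Paley--Wiener--Schwartz -- is sound: since both $\tilde c$ and $(-\Delta)^{1/2}\tilde c$ are then compactly supported, both $\widehat{\tilde c}(\xi)$ and $\vert\xi\vert\,\widehat{\tilde c}(\xi)$ are restrictions of entire functions, and on a ray $\xi=t\omega$ the entire functions $g(t)$ and $t f(t)$ (with $f(t)=\widehat{\tilde c}(t\omega)$) agree for $t>0$, hence everywhere, so $t f(t)=-t f(t)$ for $t<0$ forces $f\equiv 0$ and thus $\tilde c\equiv 0$. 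What the paper's citation buys is brevity and access to a more general statement about Riesz potentials; what your self-contained argument buys is transparency about where uniqueness really comes from -- the multiplier $\vert\xi\vert$ is not a polynomial, so simultaneous compact support of $\tilde c$ and $(-\Delta)^{1/2}\tilde c$ is a rigid constraint -- and it cleanly explains, as you note, why the same mechanism fails for the Coulomb potential in the far-field scenario, where the corresponding multiplier $\vert\xi\vert^2$ is a polynomial and no rigidity is gained.
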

\begin{proof}
According to Lemma \eqref{lem:reciprocity} and Lemma \eqref{lem:fullactivation} it suffices to show that \eqref{eq:fullactivationidentity} implies $\tilde c \equiv 0$ in $\Omega$. First of all we see that with two linearly independent vectors $\eta_i(y)$   we obtain
$$ \left\{ \frac{\eta_i(y) \times  e}{|\eta_i(y) \times  e|} ~|~ e \in \mathcal{S}^2 \right\} = \mathcal{S}^2, $$
i.e. we can achieve arbitrary directions in \eqref{eq:fullactivationidentity}. Hence, we find
$$ \nabla_y \int_\Omega \tilde c(x)   \frac{1}{|x-y|^4}  ~dx  = - 4 \int_\Omega \tilde c(x) \frac{x-y}{|x-y|^6}  ~dx  = 0.  $$ 
Noticing its decay at infinity, the Riesz-Potential
$$ R_4(y) = \int_\Omega \tilde c(x)   \frac{1}{|x-y|^4}  ~dx  $$ 
consequently vanishes for all $y \in \mathcal{O}$. Since $R_4$ is analytic outside $\Omega$ and ${\mathcal O}$ unique analytic continuation implies that it vanishes in $\mathbb{R}^3 \setminus \Omega$. This is well-known to imply $\tilde c \equiv 0$ in $\Omega$ (cf. \cite{Isakov1,Isakov2}).
\end{proof} 

\section{Discrete Forward Model and Numerical Solution}
\label{sec:discreteForwardModel}

In the following we discuss an appropriate discretization of the inverse problem, its implementation and finally the numerical optimization of the discretized variational model.

\subsection{Discrete Operator}
In the following we provide a discretization of the MRXI forward operator \eqref{eq:fredholmformulation}. For our purposes we set $ \Omega\defgr[0,1]^3 $ as a domain for the magnetic nanoparticles. We assume that activation fields and sensors have finite but positive distance to $\Omega$. Therefore we define $ \Omega_0 \defgr [-r,1+r]^3\backslash\Omega $ that holds both coils and sensors with $0<r<\infty$.

In the discrete setting the curve $\varphi_\alpha$ that reassembles the conductor coil of an activation $\alpha$ with length $L_\alpha$ is cut into $l$ piecewise linear segments. Then the $k$-th segment is defined as
$$\varphi_{\alpha,k}\colon\left[(k-1)\frac{L_\alpha}{l},k\frac{L_\alpha}{l}\right] \rightarrow\Omega_0, \quad 
t \mapsto a_k + t(b_k-a_k) $$
with $a_k, b_k\in\Omega_0$. Then the approximated conductor path $\tilde{\varphi}_\alpha$ is defined as
$ \tilde{\varphi}_\alpha \defgr \bigoplus\limits_{k=1}^l\varphi_{\alpha,k}$
%
where $\bigoplus$ defines the concatenation of $\varphi_{\alpha,k}$ for $k=1,\ldots,l$. In general we require that $ \lim\limits_{l\rightarrow\infty} \tilde{\varphi}_\alpha = \varphi_\alpha $ to provide a proper discretization of the activation coil. Then the $k$-th segment provides a magnetic field in $w\in\Omega$ (cf.\cite{Hanson2002}):
\begin{align}
\mathbf{B}_{\alpha,k}^\textbf{coil}\colon\Omega&\rightarrow\R^3\label{eq:discreteActivation}\\
w&\mapsto\vartheta\frac{\vert a_k-w\vert+\vert b_k-w\vert}{\vert a_k-w\vert\vert b_k-w\vert}\frac{(a_k-w)\times(b_k-w)}{\vert a_k-w\vert\vert b_k-w\vert+(a_k-w)\cdot(b_k-w)}.\nonumber
\end{align}

Thus the magnetic field induced by a $l$-segmented activation coil is given by
$$\mathbf{B}_\alpha^\textbf{coil}(w)=\sum_{k=1}^{l}\mathbf{B}_{\alpha,k}^\textbf{coil}(w).$$
Now we define a disjunct decomposition $(\Omega_k)_{k=1\ldots N}$ of $\Omega$ as well as a set of midpoints $W = \{w_k\}_{k=1\ldots N}$ and demand that $\overline{\cup_k\Omega_k} = \Omega$ holds. Then the discretized particle distribution $c$ is defined by 
\begin{equation}
c\colon W \rightarrow\R, 
w_k  \mapsto \frac{1}{\vert\Omega_k\vert}\int\limits_{\Omega_k}c(w)~dw.
\end{equation}
Thus, on a grid $W$ with $N$ nodes, the continuous particle distribution $c\in\mathcal{L}^2(\Omega)$ can be understood as an element $c\in\R^N$ in the discrete setting. 

For a given but fixed measurement $\sigma$, Equation \eqref{eq:fredholmformulation} can be translated into a linear operator
\begin{equation}
\mathbf{K}_{\alpha, \sigma}\colon \R^N \rightarrow\R, \quad
c\mapsto\sum\limits_{k=1}^N\mathbf{k}_\alpha(w_k,\sigma)c(w_k)\label{eq:discreteForwardOp}
\end{equation}
where $\mathbf{k}_\alpha$ is defined as seen in Equation \eqref{eq:mrxikernel}. However, in this case we are using the discrete approximation of $\mathbf{B}_\alpha^\textbf{coil}$ (Equation \eqref{eq:discreteActivation}) instead. In the end $\mathbf{K}_{\alpha, \sigma}c = g \in\R$ provides a measurement of the full amplitude of the relaxation induced by coil activation $\alpha$. 

\subsection{Simplified Coil Activation}\label{ssec:discretesimplifiedcoil}

We see that a realistic implementation of the activation coil using piecewise linear sections leads to Equation \eqref{eq:discreteActivation}.
As a result the number of calculations, and therefore the complexity of the computation speed, increases with higher precision modeling of the coil.
However, in Subsection \ref{ssec:idealmodel} we have shown that the coil can be approximated in the limit by a small coil, maintaining its magnetic properties, resulting in the idealized model.
Thus, the excitation coil can be implemented as a magnetization peak $\eta_\alpha\delta$.
This way both, coil excitation and dipole relaxation, is described by Equation \eqref{eq:measmagnetization}, however the coil activation fields are not restricted by any measuring direction $\sigma_n$.
This leads to a much more lightweight computation for the coil activation.

\subsection{Matrix Assembly: Sensors and Activation Patterns}

In an experimental setup the amplitude of the particle relaxation is acquired in multiple points $\sigma_1, \ldots, \sigma_s$ where $s$ provides the number of sensors in the system. Due to the linearity in $c$, Equation \eqref{eq:discreteForwardOp} can be translated in a matrix representation ${\mathbf{K}}_\alpha \in \R^{s\times N}$ combining all measuring points $\sigma_1, \ldots, \sigma_s$:
$$ 
{\mathbf{K}}_\alpha\defgr\left[\begin{array}{c}
\mathbf{K}_{\alpha, \sigma_1}\\
\vdots\\
\mathbf{K}_{\alpha, \sigma_s}
\end{array}\right].
$$
Then for multiple subsequent activations $\alpha_1\ldots\alpha_r$ this leads to a fully discretized forward operator matrix ${\mathbf{K}}\in\R^{M\times N}$ with $M=rs$:
\begin{align}
	\begin{split}		
		{\mathbf{K}}\defgr\left[\begin{array}{c}
			{\mathbf{K}}_{\alpha_1}\\
			\vdots\\
			{\mathbf{K}}_{\alpha_r}
		\end{array}\right].
	\end{split}\label{eq:fulldiscreteoperator}
\end{align}
For a generic variational approach it suffices to have the discretized forward operator ${\mathbf{K}}$, respectively its matrix representation. For advanced numerical methods and in particular for future design of activation and measurement strategies it is however crucial to keep in mind its internal structure.
%

\subsection{Alternating Direction Method of Multipliers (ADMM)}

We defined our desired model for MRXI in  \eqref{eq:variationModel}:
\begin{align}
c^* \in \arg\min_c\frac{1}{2}\Vert \mathbf{K}c-g\Vert^2_2 + \alpha\Vert\nabla c\Vert_1 + \chi_{\geq 0}(c)\label{eq:variationModelII}
\end{align}
where $\Vert\nabla (\cdot)\Vert_1$ describes the discrete version of the Total Variation $\operatorname{TV}(\cdot)$.
To approach this problem we use ADMM to deduce an iterative scheme as suggested by \cite{Boyd2012}.

In the general formulation of ADMM we have convex functionals $\mathcal{D}\colon\R^N\to\R$ and $\Psi\colon\R^P\to\R$, linear operators $E\colon\R^N\to\R^M$ and  $H\colon\R^P\to\R^M$ as well as variables $c\in\R^N$, $v\in\R^P$ and $u\in\R$. Then a generalized representation of a minimization problem can be denoted as
\begin{equation}
	\mymin{c,v}\mathcal{D}(c)+\Psi(v) \qquad
\text{s.t.}\qquad Ec+Hv=u.\label{eq:admmgeneral}
\end{equation}
This constrained problem can be expressed by the augmented Lagrangian
\begin{align*}	
\mathcal{L}_\rho\left(c, v, \lambda\right)=\mathcal{D}\left(c\right)+\Psi(v)+\lambda^T\left(Ec+Hv-u\right)+\frac{\rho}{2}\left\Vert Ec+Hv-u\right\Vert_2^2.
\end{align*}
Basically, the Lagrangian is a reformulation of the minimization problem, where a set $(c^\ast,v^\ast,\lambda^\ast)$ is an extrema of the Lagrangian if $c$ and $v$ are minimizer of the original problem. This leads to the following update scheme:
\begin{align}
c^{k+1}&=\myargmin{c}\mathcal{L}_\rho\left(c, v^k, \lambda^k\right)\label{eq:ADMM1}\\
v^{k+1}&=\myargmin{v}\mathcal{L}_\rho\left(c^{k+1}, v, \lambda^k\right)\label{eq:ADMM2}\\
\lambda^{k+1}&=\lambda^k+\rho\left(E(c^{k+1})+H(v^{k+1})-u\right)\label{eq:ADMM3}.
\end{align}

To transfer our variational model \eqref{eq:variationModelII} into the ADMM scheme \eqref{eq:admmgeneral} we have to define 
\begin{align*}
\mathcal{D}(c)&\defgr\frac{1}{2}\Vert \mathbf{K}c-g\Vert^2_2&v&\defgr\left(\begin{matrix}
\hat{v}\\v_+
\end{matrix}\right)\\
\Psi(v)&\defgr\alpha\Vert\hat{v}\Vert_1 + \chi_{\geq 0}(v_+)&u&\defgr 0\\
E&\defgr\left(\begin{matrix}
\nabla\\\mathbb{I}
\end{matrix}\right)&H&\defgr-\mathbb{I}.
\end{align*}
In this setting we can directly apply the ADMM scheme \eqref{eq:ADMM1}-\eqref{eq:ADMM3} for solution of the inverse problem of MRXI. Note that the major computational effort is in each iteration step is due to the first step, i.e. computing an update for $c$, which involves the solution of a large linear system with the full matrix $ \mathbf{K}^T \mathbf{K} + \rho E^T E$. There is strong future potential in speeding up the numerical linear algebra by exploiting the block structure of $\mathbf{K}$ in the future. For first tests in this paper we simply use a direct solver however. 
\section{Numerical Simulation}
In this section we present some simple results based on the idealized discrete setup that is described in Subsection \ref{ssec:discretesimplifiedcoil}.
The benefit of this simplified approach is that we can deduce a 2D framework for MRXI: the region of interest is now described by a 2D plane containing the space $\Omega$ for the magnetic nanoparticles and $\Omega_0$ for coils and sensors.
Due to the simplified model the coils preserve their magnetic properties in the 2D case and are represented by a position $y_\alpha$ and a magnetic moment $\eta_\alpha$.

\subsection{Setup}
\begin{figure}
	\centering
	\begin{subfigure}[t]{0.47\textwidth}\centering
		\includegraphics[trim=150 300 150 277,clip,width=\linewidth]{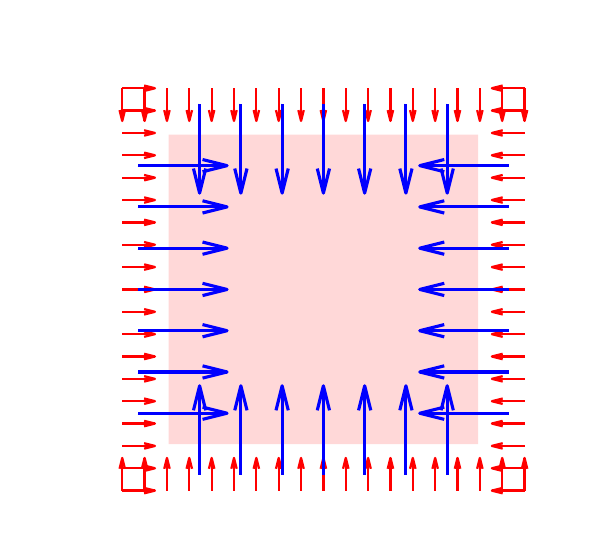}
		\subcaption{\footnotesize 2D Setup with coils, sensors and region of interest}
		\label{img:2dmrxisetip}
	\end{subfigure}
	\begin{subfigure}[t]{0.47\textwidth}
		\includegraphics[trim=150 297 150 280,clip,width=\linewidth]{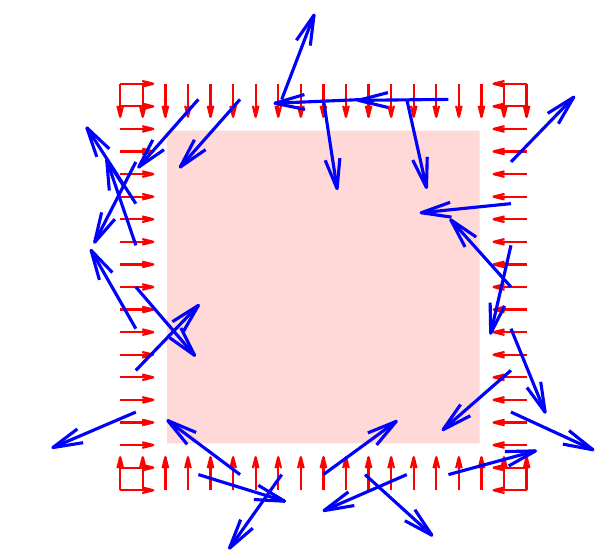}
	\subcaption{\footnotesize Setup with randomized coil orientation}
	\label{img:2dmrxisetiprandom}
	\end{subfigure}
	\caption{\footnotesize Visualization of the 2D MRXI setup. Activation coils are marked by blue arrows where the arrow origin marks the coil position. Sensors are defined by red arrows (again the sensor position is marked by arrow origin). $\Omega$ is marked by the pink area.}
\end{figure}

We set $\Omega$ as a square area. At each side seven activation coils are setup pointing towards the domain.
Furthermore we set $19$ measurement points at each of the four sides, again pointing towards the domain $\Omega$.
A visualization of this setup is shown in Figure \ref{img:2dmrxisetip}.
Additionally we consider a second setup with coil orientation as key difference: positions of sensors and coils are identical, however coil orientations are randomized.
An illustration of this alternative setup is shown in Figure \ref{img:2dmrxisetiprandom}. 
For both setups we consider consecutive coil activation.
This means we activate each coil separately and only once. 
Then for each of the $28$ coil activations the sensor system acquires $76$ measurements.
Thus this MRXI setup yields a measurement vector $g$ with $g \in \R^{28\cdot76} = \R^{2128}$.

For our simulations we consider three phantoms reassembling the particle distribution $c$: a simple P shape cluster of magnetic nanoparticles, a transverse slice of the widely used Shepp-Logan phantom and a simplified phantom of a tumor with a tube-like gap representing an intersecting vein (see first row of Figure \ref{img:results} for reference).

\subsection{Reconstructions}
\begin{figure}
	\centering
	\includegraphics[width=0.8\textwidth]{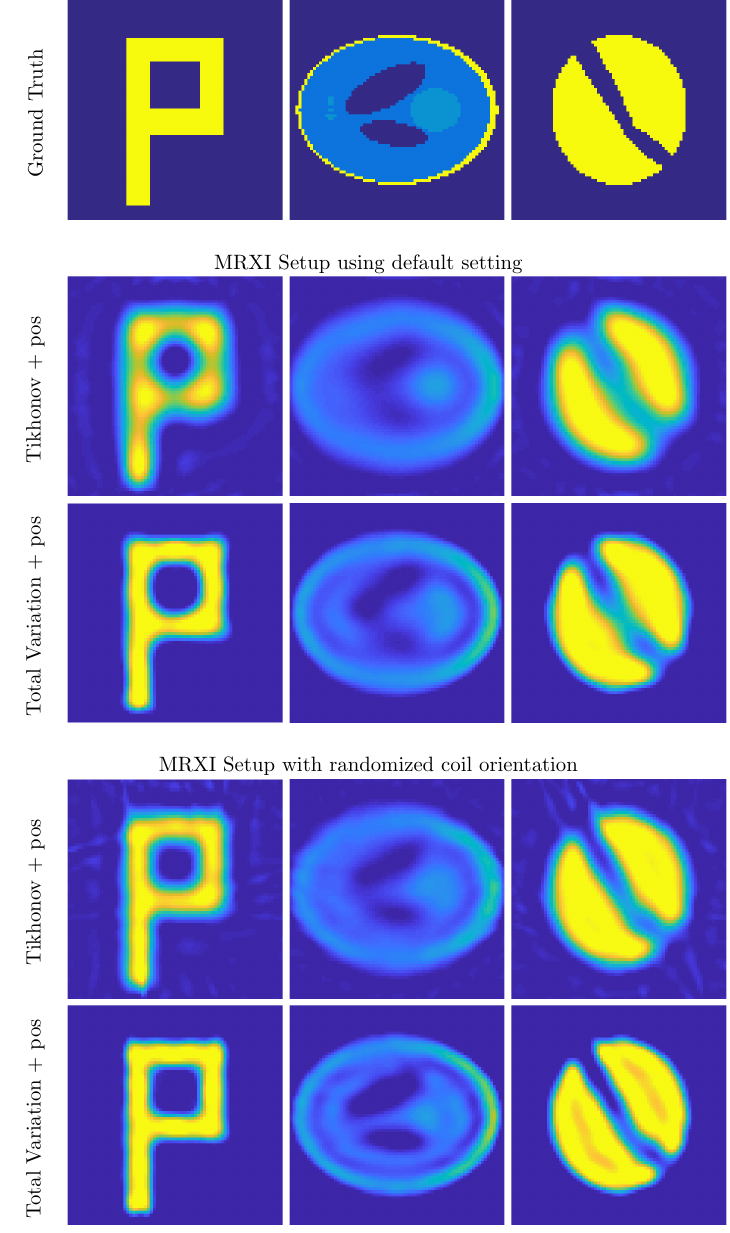}%
	\caption{\footnotesize Particle distribution recovered using Tikhonov or Total Variation as regularizer. Positivity constraint is applied to all reconstructions.}
	\label{img:results}
\end{figure}
We aim for reconstructed resolutions of $75\times75$ pixels. The discrete forward operator (Equations \eqref{eq:discreteForwardOp} and \eqref{eq:fulldiscreteoperator}) can be implemented as a matrix $\mathbf{K}\in\R^{2128\times 5625}$. As a result our forward operator is highly underdetermined.
Next we will compare the variational model using Tikhonov regularization with positivity constraint
$$ c^\ast_{\operatorname{Tikh}} \in \arg\min_c\frac{1}{2}\Vert \mathbf{K}c-g\Vert^2 + \alpha\Vert c\Vert_2^2 + \chi_{\geq 0}(c) $$
and Total Variation regularization with positivity constraint (as of Equation \eqref{eq:variationModel})
$$ c^\ast_{\operatorname{TV}} \in \arg\min_c\frac{1}{2}\Vert \mathbf{K}c-g\Vert^2 + \alpha\operatorname{TV}(c) + \chi_{\geq 0}(c).$$

Since we limit ourselves to synthetic data we have to consider inverse crime to avoid optimal data fitting due to biased measurements. Therefore we simulate the measured data $g$ on a high resolution grid (in this case $197\times197$) and added noise with SNR 80dB.

The results shown in the second row of Figure \ref{img:results} illustrate that we can derive a rough estimate of the particle distribution using Tikhonov regularization.
One can distinguish the hole in the P, the empty spaces in the Shepp-Logan phantom as well as the particle free 'vein' in the simplified tumor phantom.
However, we can identify a general lack of contrast in regard to edges and areas with different particle distributions.
Furthermore we find small errors in the phantom background.

Optically we see improvements using Total Variation regularization.
The general shape of the P is much more accurate and the overall look is much cleaner.
This also holds for the Shepp-Logan phantom, where the inner contours are slightly better reconstructed, and the tumor phantom, where the vein can be identified as an intersecting and particle free area.

\begin{table}\centering
	\begin{tabular}{r|c|c|c}
		& P-shape & Shepp-Logan          & Tumor            \\ \hline\hline
		Tikh. + pos             & 0.115  & 0.100              & 0.097         \\
		TV + pos                & 0.210 & 0.158               & 0.187          \\ \hline 
	\end{tabular}
	\caption{\footnotesize SSIM values for reconstructions compared to ground truth using the default MRXI setting (see Figure \ref{img:2dmrxisetip} and \ref{img:results})}\label{t:ssim}
	\begin{tabular}{r|c|c|c}
		& P-shape & Shepp-Logan          & Tumor            \\ \hline\hline
		Tikh. + pos             & 0.155 & 0.139              & 0.136          \\
		TV + pos                & 0.257 & 0.222              & 0.212          \\ \hline
	\end{tabular}
	\caption{\footnotesize SSIM values for reconstructions compared to ground truth using the randomized coil orientation setting of MRXI (see Figure \ref{img:2dmrxisetiprandom} and \ref{img:results})}\label{t:ssimRandom}
\end{table}

Nevertheless the overall impression is that these basic reconstruction methods lack of precision and quality. This is also represented in the structure similarity (SSIM \cite{Wang2004}) index: Table \ref{t:ssim} contains the SSIM values for reconstructions shown in Figure \ref{img:results}. We see that for all given phantoms the quality of the reconstruction increases drastically by using Total Variation regularization. The overall similarity is not yet convincing. The lack of quality can be due to various reasons:

\begin{itemize}
	\item {\bf The choice of the regularization parameter:} note that the parameters are fixed for all phantoms. Therefore a proper choice of parameter in regard of the type of phantom will lead to more detailed reconstructions.
	\item {\bf Simplicity of Algorithm:} in our approach we naively reconstruct the system as it is and do not including any information in regard of coil or sensor placement. These information can contribute in the workflow of a more sophisticated algorithm.
	\item {\bf Trivial coil placement and activation pattern:} the choice of coil activations and positioning, as well as effective positioning using multiple coil activations at once can lead to more detailed reconstructions.
\end{itemize}

To attend the latter points we consider the MRXI setup with randomized coil orientation.
Then we see further improvement in the reconstructed particle distribution.
In particular edges of structures in the image are reconstructed more detailed.
Furthermore the smaller inner circle in the Shepp-Logan phantom can be seen more properly in both reconstructions methods, Tikhonov and Total Variation. 
In the tumor phantom the edge of the intersecting vein is much better localized, especially with Total Variation as regularization term.
This also reflects in the SSIM values for reconstruction in the randomized setup: Table \ref{t:ssimRandom} confirms the impression of higher quality reconstructions and endorses the advantages of Total Variation for the considered phantoms. 

Note that, in regard to Total Variation, which prioritizes solutions with sharp edges and homogeneous areas, the shown results are somehow expected. 
This is why Total Variation regularization is highly applicable for the kind of phantoms used in this study.

In summary we see some major improvements using Total Variation as regularization method and enforce randomized coil orientations.

\section{Conclusion and Outlook}

We have discussed the mathematical modeling and analysis of inverse problems in magnetorelaxometry imaging, which we have also put in the framework of inverse source problems for partial differential equations.
The variational regularization methods incorporating Total Variation regularization and a positivity constraint proposed in this paper indicate the potential improvement compared to simple linear reconstructions used in earlier investigations previously.
Our uniqueness analysis indicates that it will be beneficial to activate coils oriented in at least two different directions and possibly different radial position relative to the region of interest $\Omega$. 

The determination of efficient activation strategies is a key topic for future research.
Besides optimal choice of the locations for activation it also seems interesting to use optimal multiple activations.
This amounts to ideal choice of the parameters $\omega_\alpha^\beta$ in \eqref{eq:multipleactivation}.
Finally, the question of efficient sensor positioning arises, given a certain amount of available sensors.
Ideally, optimal experimental design is to be carried out for all parameters of the activation and measurement together, but one may expect that initial approaches will rather solve specific subproblems first.

On the reconstruction side the variational formulation of MRXI leads to a much more flexible model to find a suitable particle distribution.
At this point Total Variation regularization shows improvements in comparison to methods used in the literature, namely Tikhonov regularization.

For our reconstructions we added Gaussian noise to the measured data. However, the proposed model does not include noise in any sense, consequently the exact modeling has to be discussed in future studies.

\section*{Acknowledgements}
This work has been supported by the German Science Foundation (DFG) within the priority program CoSIP, project
CoS-MRXI (BA 4858/2-1, BU 2327/4-1). The authors thank Guillaume Bal (Columbia University) and Victor Isakov (Wichita State University) for useful and stimulating discussions.

MB would like to thank the Isaac Newton Institute for Mathematical Sciences, Cambridge, for support and hospitality during the programme Variational Methods for Imaging and Vision, where work on this paper was undertaken, supported by EPSRC grant no EP/K032208/1 and the Simons Foundation.
 
\bibliographystyle{abbrv}
\bibliography{bib/literature}

\end{document}